\newtheorem{theorem}{Theorem}
\newtheorem{corollary}{Corollary}
\newtheorem{lemma}{Lemma}
\newtheoremstyle{case}{}{}{}{}{}{:}{ }{}
\theoremstyle{case}
\newtheoremstyle{subcase}{}{}{}{}{}{:}{ }{}
\theoremstyle{subcase}
\newtheoremstyle{fact}{}{}{}{}{}{:}{ }{}
\theoremstyle{fact}
\begin{document}
\title{An Upper Bound on the Independent Domination Number of $k$-Trees}

\author{Enoch Akade, Andrew Pham \footnote{Corresponding Author: andrew.pham@aamu.edu}}
\affil{Department of Physics, Chemistry, and Mathematics, Alabama Agricultural and Mechanical University, Normal, AL, USA}
\date{}
\maketitle

\begin{abstract}
Given a simple, finite, nonempty graph $G=(V(G),E(G))$, a vertex subset $D\subseteq V(G)$ is said to be a dominating set if every vertex $v\in V(G)-D$ is adjacent to a vertex in $D$. The independent domination number $\gamma_i(G)$ is the minimum cardinality among all independent dominating sets of $G$. Since determining the domination number for general graphs is NP-complete, we focus on the class of $k$-trees. Favaron established a  tight upper bound for $1$-trees, while Campos and Wakabayashi determined a tight upper bound for maximal outerplanar graphs, a subclass of $2$-trees. We generalize these results and establish a tight upper bound for the independent domination number of $k$-trees for all $k\in \mathbb{N}$. 
\end{abstract}

\section{Introduction} 
Throughout this paper, we consider only simple, finite, and nonempty graphs. We begin by introducing some definitions and terminology. Let $G=(V(G),E(G))$ be a simple, finite, nonempty graph. Given a vertex $v\in V(G)$, the \textbf{(open) neighborhood} of $v$, denoted by $N_G(v)$, is the set of all vertices adjacent to $v$; that is, $N_G(v)=\{u:uv\in E(G)\}$. The \textbf{closed neighborhood} of $v$ is defined as $N_G[v]=N_G(v)\cup \{v\}$. The \textbf{degree} of $v\in V(G)$ is given by $d_G(v) = |\{u : uv \in E(G)\}|=|N_G(v)|$. A \textbf{$j$-,$j^+$-, and $j^-$-vertex} is a vertex of degree exactly $j$, at least $j$, and at most $j$, respectively. Given a graph $G$, a graph $H$ is a \textbf{subgraph} of $G$ if the vertex and edge sets of $H$ are subsets of those of $G$; that is, $V(H) \subseteq V(G)$ and $E(H) \subseteq E(G)$. If $H$ is a subgraph of $G$, we write $H \subseteq G$. Given a nonempty subset of vertices $S \subseteq V(G)$, the \textbf{induced subgraph} defined by $S$, denoted by $G[S]$, is the subgraph having vertex set $V(G[S])=S$ and containing all edges from $G$ that connect vertices in $S$; that is, $E(G[S])=\{uv \in E(G): u,v \in S\}$. 

A \textbf{dominating set} of a graph $G$ is a vertex subset $D\subseteq V(G)$ such that every vertex $v\in V(G)-D$ is adjacent to a vertex in $D$. If $D$ is both dominating and independent, we say $D$ is an independent dominating set. The \textbf{independent domination number}, denoted $\gamma_i(G)$, of $G$ is the minimum cardinality among all independent dominating sets of $G$, that is, the size of a minimum independent dominating set. Domination and independent domination have applications across many areas such as network monitoring, resource allocation, and infastruction planning, where the goal is efficient coverage of the network. Domination number of graphs has been an active research topic for several decades. Garey and Johnson \cite{garey} showed that determining the domination number of general graphs, known as the dominating set problem, is NP-complete. As determining the domination number can be very difficult, it is of interest to focus on particular classes of graphs. In particular, we focus on the class of $k$-trees and provide a tight upper bound on the independent domination number of $k$-trees for all $k\in\mathbb{N}$. 

A \textbf{tree} is a graph in which any pair of vertices is connected by exactly one path, that is, a connected graph with no cycles. Many aspects of trees and tree-related graphs have been studied extensively in graph theory. In 1969, Beineke and Pippert \cite{beineke} generalized trees into the concept of $k$-trees. For $k\in \mathbb{N}$, a \textbf{$k$-tree} on $n\geq k+1$ vertices is defined as follows: (1) The smallest $k$-tree is the complete graph $K_{k+1}$. (2) Given a $k$-tree $G$, a new $k$-tree can be constructed by adding a new vertex and joining it to the vertices of a $k$-clique subgraph in $G$. Following this definition, we can see that the class of $1$-trees is exactly all trees. Although Beineke and Pippert defined $k$-trees for all $k$, the concept of a $2$-tree was introduced one year earlier in 1968 by Harary and Palmer \cite{harary}. 

\begin{figure}[h]
\begin{center}
\includegraphics[scale=0.5]{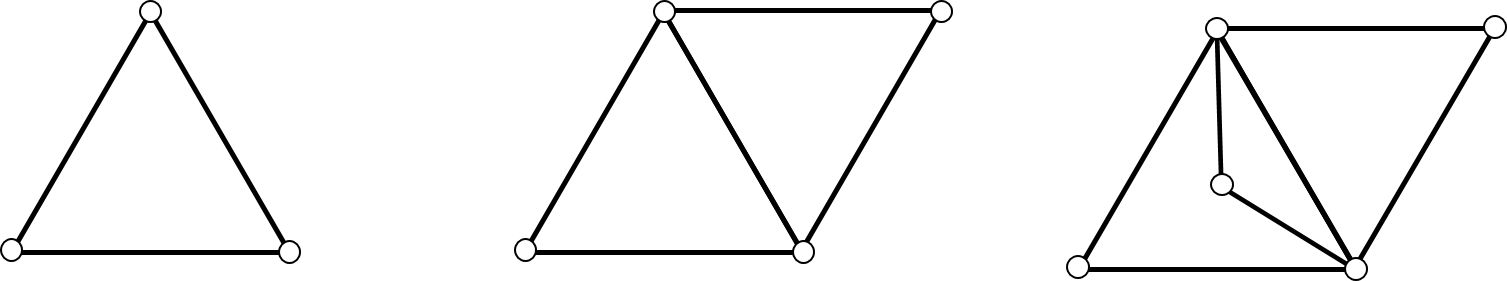}
\caption{Constructing 2-trees from existing 2-trees}
\end{center}   
\end{figure}

The concept of $k$-trees has many applications in Computer Science, Chemistry, Biology, and engineering. In particular, $2$-trees are simple, maximal series-parallel graphs, and can play significant roles in communication and electrical network problems.  Planar $3$-trees, also known as Apollonian networks, are used in models of force chains in granular packings, hierarchical road systems, electrical supply networks \cite{andrade}, and neuronal systems \cite{pellegrini}. We note that some papers define a $k$-tree as a tree with maximum degree $k$; however, throughout this paper, we refer to Pippert's and Beineke's definition of $k$-trees. Further studies on $k$-trees can be found at \cite{duffin} and \cite{shook}.

\section{Preliminary Results}

Favaron \cite{favaron} determined an upper bound on the independent domination number of $1$-trees in respect to the graph's order and its number of vertices of degree one. 
\begin{theorem}\cite{favaron}
If $T$ is a tree with $n$ vertices and $\ell$ vertices of degree $1$, then $$\gamma_i(T)\leq \frac{n+\ell}{3}.$$
\end{theorem}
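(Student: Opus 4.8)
The plan is to establish the equivalent inequality $3\gamma_i(T)\le n+\ell$ by strong induction on the order $n$. I would first dispose of the degenerate cases: the bound fails for $K_1$ (there $\gamma_i=1$ while $(n+\ell)/3=\tfrac13$), so the statement is really about trees with $n\ge 2$, and I would verify by hand the small base cases — a single edge, short paths, and stars — where $\gamma_i$ is immediate. For the inductive step I would root $T$ at one end $v_d$ of a \emph{longest path} $v_0v_1\cdots v_d$. Then $v_0$ is a deepest leaf, $u:=v_1$ is a support vertex, and maximality of the path forces every neighbour of $u$ except its parent $w:=v_2$ to be a leaf. Write $t\ge 1$ for the number of leaf children of $u$ and $L$ for that set of leaves.

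The mechanism of the induction is the arithmetic fact that the target $(n+\ell)/3$ decreases by exactly $1$ when three vertices are deleted and the number of leaves is unchanged — precisely the situation at the end of a path, where $\gamma_i$ also drops by $1$. Accordingly I would delete a carefully chosen set $S\ni u$, obtain by induction an independent dominating set $D'$ of $T':=T-S$, and argue that $D'$ extends to an independent dominating set of $T$ by adjoining a single vertex; the conclusion then follows from $|D|=|D'|+1\le (n'+\ell')/3+1\le (n+\ell)/3$, provided the deletion is calibrated so that $n'+\ell'\le n+\ell-3$.

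The clean cases are those with $t=1$. If $w$ has degree at least $3$, I delete $S=\{u,v_0\}$; the only threat is that the inductive set $D'$ might contain $w$, which would spoil independence when $u$ is added, and this I repair by adjoining the leaf $v_0$ instead (its sole neighbour $u$ is gone, so no conflict arises, and $u$ is then dominated through $w\in D'$). If instead $\deg(w)=2$, deleting $\{u,v_0\}$ would turn $w$ into a new leaf and spoil the count, so I delete the three spine vertices $\{v_0,u,w\}$ and add $u$ back, exactly reproducing the path computation; the boundary subcase $\deg(v_3)=2$, which promotes $v_3$ to a leaf, is compensated precisely by the three deleted vertices.

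The main obstacle is the parent vertex $w$, both for independence and for the leaf count, and it is most acute when $t\ge 2$. Here $u$ cannot be dropped from $D$ without forcing all $t$ of its leaf children into $D$, so $u$ must be kept; but if the recursively produced $D'$ contains $w$, adjoining $u$ violates independence, while adjoining the $t$ leaves is too expensive. The natural remedy is to delete the closed neighbourhood $N[u]=\{u,w\}\cup L$ and recurse on the (possibly several) subtrees hanging from $w$, yet a direct count shows this only succeeds when $\deg(w)\le 2t$: deleting a high-degree $w$ can create one new leaf at each of its surviving neighbours, and these manufactured leaves can overwhelm the $t$ leaves destroyed. Controlling this leaf inflation is the heart of the argument; I expect to resolve it either by choosing the support vertex so that its parent has bounded degree relative to $t$ (for instance by also accounting for the subtrees that the new leaves carry), or by strengthening the induction hypothesis to guarantee a minimum independent dominating set that avoids the attachment vertex $w$, so that $u$ can always be adjoined safely.
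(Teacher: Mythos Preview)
The paper does not give a standalone proof of this theorem --- it is quoted from Favaron as a preliminary result --- but the paper's own Theorem~3, specialised to $k=1$, recovers it, and the method there is entirely different from yours. It is a colouring/averaging argument in Tokunaga's style: Lemma~1 produces a proper $3$-colouring of $T$ in which every vertex of degree at least~$2$ sees all three colours in its closed neighbourhood, so each colour class $Z_i$ is independent and dominates everything except possibly some leaves; adjoining to $Z_i$ the set $S_i$ of undominated leaves yields an independent dominating set, and since each leaf misses exactly one colour, $\sum_{i=1}^{3}(|Z_i|+|S_i|)=n+\ell$, whence some $Z_i\cup S_i$ has size at most $(n+\ell)/3$. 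No local surgery, no case analysis on support vertices, and in particular no need to control $\deg(w)$.

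Your inductive scheme is natural and the $t=1$ cases are handled correctly, but the proposal has a genuine gap exactly where you flag it: the case $t\ge 2$ with $\deg(w)>2t$ is left unresolved. Deleting $N[u]$ can create up to $\deg(w)-1$ new leaves among the surviving neighbours of $w$, and your arithmetic $n'+\ell'\le n+\ell-3$ then fails. Neither proposed escape is actually executed: the longest-path choice gives you no a priori bound on $\deg(w)$ in terms of~$t$, and ``strengthen the induction hypothesis to avoid $w$'' is a different statement that would itself require proof (and is false in general --- a minimum independent dominating set may be forced to contain a prescribed cut vertex). As written, then, this is a plan with its hardest case still open; the colouring argument above sidesteps the difficulty completely.
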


In general, this bound is tight for trees. Consider a caterpillar graph $G$ constructed by adding a pendant vertex to each vertex on the path $P_m$. Then $G$ has $2m$ vertices and $m$ vertices of degree one. By Theorem 1, $\gamma_i(G)\leq \frac{2m+m}{3}=m$. However, for each pendant vertex $v_i$ for $1\leq i \leq m$, every independent dominating set of $G$ must contain either $v_i$ or its neighbor. As the closed neighborhoods $N[v_i]\cap N[v_j]=\emptyset$ for $i\neq j$, it follows that $\gamma_i(G)\geq m$. Thus, $\gamma_i(G)=m$. 

\begin{figure}[h]
\begin{center}
\includegraphics[scale=0.6]{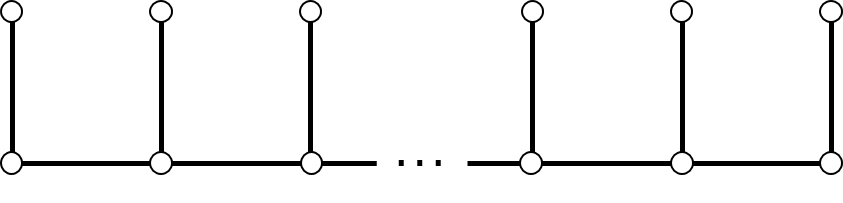}
\caption{The caterpillar graph on $2m$ vertices.}
\end{center}
\end{figure}

In 2013, Tokunaga \cite{tokunaga} and Campos and Wakabayashi \cite{campos} independently proved an upper bound on the domination of maximal outerplanar graphs.

\begin{theorem} \cite{tokunaga,campos}
Suppose $G$ is an $n$-vertex maximal outerplanar graph with $n\geq 3$ having $k$ vertices of degree $2$, then $$\gamma(G)\leq \left\lfloor \frac{n+k}{4}\right\rfloor.$$
\end{theorem}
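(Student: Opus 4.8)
The plan is to proceed by strong induction on the order $n$, organizing the argument around the quantity $n + k$ so that each vertex placed into the dominating set is charged a decrease of $4$ in $n + k$. The base cases --- $n = 3$ together with a small number of triangulated polygons of low order --- are verified by hand; for example the triangle $K_3$ has $n = 3$, $k = 3$, and $\gamma = 1 = \lfloor 6/4 \rfloor$.

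For the inductive step I would use the weak dual tree $T$ of $G$: the $n-2$ inner triangular faces of the maximal outerplanar graph $G$ are the nodes of a tree $T$, and each leaf of $T$ is an \emph{ear}, i.e.\ a triangle $uab$ with two boundary edges $ua, ub$ whose apex $u$ has degree $2$. (It is convenient to first record that for $n \ge 4$ any two degree-$2$ vertices are nonadjacent, so distinct ears have distinct apices.) Fixing a leaf ear with apex $u$ and base chord $ab$ and walking along $T$, I would expose the local fan of triangles sharing a base vertex, select one well-chosen vertex $v \in \{a, b\}$ (or a neighbor thereof), place $v$ in the dominating set, delete the vertices that $v$ dominates, and recurse on the remaining graph $G'$.

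Two facts must then be checked at each step: (i) that $G'$ is again a maximal outerplanar graph (possibly a disjoint union of smaller ones), so that the inductive hypothesis applies; and (ii) that the deletion decreases $n + k$ by at least $4$. Granting these,
\[
\gamma(G) \le 1 + \gamma(G') \le 1 + \left\lfloor \frac{(n + k) - 4}{4} \right\rfloor = \left\lfloor \frac{n + k}{4} \right\rfloor,
\]
which is the desired bound. Point (i) follows from the fan/ear decomposition, since deleting a degree-$2$ apex (and more generally a fan) from a triangulated polygon again yields a triangulated polygon.

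I expect the entire difficulty to reside in point (ii), namely controlling the net change in the number $k$ of degree-$2$ vertices. If the chosen move deletes $d$ vertices, $e$ of which were ears, while creating $f$ new degree-$2$ vertices among the survivors, then $n + k$ drops by roughly $d + e - f$, so the move succeeds exactly when $d + e - f \ge 4$. The adversarial phenomenon is that deleting the dominated region can demote a previously high-degree base vertex to degree $2$, inflating $f$ and breaking the budget. I would therefore split the inductive step into cases according to the local configuration at the fixed ear --- principally the degrees of $a$ and $b$, and whether several ear apices share a common neighbor (a long fan) or the ear is isolated --- and in each case choose $v$ and the deleted set so as to either delete enough vertices or absorb the newly created ears into the count. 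Finding a uniform choice of $v$ that dominates a large enough region without spawning too many new degree-$2$ vertices across all of these configurations is the crux of the proof, and the small base cases are precisely the collapsed fans where direct verification is cleaner than further recursion.
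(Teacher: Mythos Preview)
The paper does not give its own proof of this theorem; it is quoted as a preliminary result from \cite{tokunaga,campos}. What the paper does do is describe the two original approaches --- Campos--Wakabayashi via internal triangles, Tokunaga via a simple coloring --- and then generalize Tokunaga's argument: Lemma~1, Corollary~1 and Theorem~3 carry out the coloring method for arbitrary $k$-trees, and specializing $k=2$ recovers the bound of Theorem~2 (maximal outerplanar graphs on $n\ge 3$ vertices being $2$-trees, and $\gamma\le\gamma_i$).

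Your plan is a genuinely different route, much closer in spirit to Campos--Wakabayashi than to Tokunaga. The coloring argument sidesteps the entire inductive case analysis you anticipate: one exhibits a proper $4$-coloring in which every $3^+$-vertex sees all four colors in its closed neighborhood; each color class $Z_i$ then dominates everything except a set $S_i$ of degree-$2$ vertices, the $S_i$ are pairwise disjoint, and averaging $|Z_i\cup S_i|$ over $i=1,\dots,4$ gives $(n+k)/4$ directly. There are no ear deletions, no tracking of newly created degree-$2$ vertices, and no case split --- the bookkeeping you correctly identify as the crux simply evaporates. What your approach would buy in exchange is a self-contained structural induction requiring no auxiliary coloring lemma; it is also the style of argument that adapts to domination-type parameters for which a global coloring is not available. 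As written, though, your proposal is a plan rather than a proof: the decisive case analysis in point~(ii) is acknowledged but not carried out, and that is exactly where such inductive arguments live or die.
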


Campos and Wakabayashi also provided families of maximal outerplanar graphs to demonstrate that their upper bound was tight in general. While Campos and Wakabayashi's proof method considered the graph's internal triangles, Tokunaga's proof instead relied on a simple coloring method. As maximal outerplanar graphs form a subclass of $2$-trees, a natural question is whether this bound and technique can be extended to $2$-trees, and more broadly, to all $k$-trees. We generalize Tokunaga's proof method for independent domination of $k$-trees for all $k\in \mathbb{N}$.

\section{Main Results}

To extend Tokunaga’s coloring-based approach beyond maximal outerplanar graphs, we first show that every $k$-tree has a suitable $(k+2)$-coloring. 

\begin{lemma}
Given a $k$-tree $G$ with at least $k+2$ vertices, there exists a proper $(k+2)$-coloring such that for all $(k+1)^+$-vertices $v\in V(G)$, $N_G[v]$ contains all $k+2$ colors. 
\end{lemma}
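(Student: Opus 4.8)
The plan is to build the coloring inductively, following the recursive construction of $k$-trees. The base case is the complete graph $K_{k+1}$ on $k+1$ vertices: since every vertex is adjacent to every other, a proper coloring requires $k+1$ distinct colors, and I will assign colors $1,2,\dots,k+1$ to its vertices. At this stage there are no $(k+1)^+$-vertices (each vertex has degree $k$), so the neighborhood condition is vacuously satisfied, and I have one color, say $k+2$, still held in reserve.

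For the inductive step, suppose $G'$ is obtained from a $k$-tree $G$ by attaching a new vertex $w$ to a $k$-clique $Q=\{u_1,\dots,u_k\}$ of $G$, and suppose $G$ already has a proper $(k+2)$-coloring with the desired property. The key observation is that $Q$, being a $k$-clique, uses exactly $k$ distinct colors among its vertices; hence at least two of the $k+2$ colors are absent from $Q$. I would color $w$ with one of these two missing colors, which immediately keeps the coloring proper since $N_{G'}(w)=Q$. The remaining missing color is what I will use to verify the neighborhood condition at $w$ itself and to check what happens to the vertices of $Q$ whose degrees just increased.

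The heart of the argument is verifying the neighborhood condition for every $(k+1)^+$-vertex of $G'$. First, any $(k+1)^+$-vertex $v$ of $G$ that is \emph{not} in $Q$ keeps the same closed neighborhood and colors, so its condition is inherited from $G$. Next, for each $u_i\in Q$: if $u_i$ was already a $(k+1)^+$-vertex in $G$, then $N_G[u_i]$ already contained all $k+2$ colors, and since $N_{G'}[u_i]\supseteq N_G[u_i]$, the condition persists. The delicate case is a vertex $u_i\in Q$ that was a $k$-vertex in $G$ and becomes a $(k+1)$-vertex in $G'$ by gaining $w$ as a neighbor. Here I must show $N_{G'}[u_i]$ now contains all $k+2$ colors. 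Since $u_i$ had degree exactly $k$ in $G$, its closed neighborhood $N_G[u_i]$ had $k+1$ vertices, which are all of $Q\cup\{u_i\}$ together with no others outside $Q$ — more precisely, $N_G[u_i]$ consists of $u_i$ and its $k$ neighbors, and I will need to understand which colors these carry relative to $w$'s new color.

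The main obstacle, and the step I expect to require the most care, is precisely this last case: I must guarantee that assigning $w$ a particular one of the two colors missing from $Q$ causes every newly-promoted $u_i$ to see all $k+2$ colors in its closed neighborhood. The worry is that a $k$-vertex $u_i$ in $Q$ might already be missing \emph{two} colors from $N_G[u_i]$, so that adding $w$ (which supplies only one new color) would still leave a gap. I would resolve this by proving a sharper invariant to carry through the induction — namely that a $k$-vertex $v$ whose closed neighborhood is missing a color is missing \emph{exactly} one color — or by exploiting the structure that $u_i$'s other neighbors, together forming part of the clique $Q$ and of $u_i$'s original attaching clique, are colored so that $N_G[u_i]$ already realizes $k+1$ distinct colors. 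The crucial bookkeeping is then to choose $w$'s color to be the unique color missing from each such $u_i$ simultaneously; if the two missing colors of $Q$ do not align with the needs of all promoted $u_i$, the naive choice fails, and I anticipate this is where the real combinatorial work lies, likely resolved by showing that a $k$-vertex in $Q$ forces the two colors absent from $Q$ to both be absent from $N_G[u_i]$ as well, so that either choice of $w$'s color completes that neighborhood.
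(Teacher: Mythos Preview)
Your framework is exactly the paper's: induct on the number of vertices, peel off a simplicial vertex, extend the coloring. You also locate the only nontrivial case correctly—the vertices $u_i\in Q$ that jump from degree $k$ to $k+1$. The gap is in how you propose to close that case.

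Your final speculation is wrong. If $u_i\in Q$ has degree $k$ in $G$, then $u_i$ is simplicial in the $k$-tree $G$, so $N_G[u_i]$ is a $(k{+}1)$-clique. In any proper coloring a $(k{+}1)$-clique carries exactly $k+1$ distinct colors, so $N_G[u_i]$ is missing \emph{exactly one} of the $k+2$ colors, not two. Concretely, $N_G[u_i]=Q\cup\{z_i\}$ for a single vertex $z_i\notin Q$; since $z_i$ is adjacent to every vertex of $Q$, its color must be one of the two colors absent from $Q$, and therefore $N_G[u_i]$ is missing precisely the \emph{other} one. Hence ``either choice of $w$'s color completes that neighborhood'' is false: only one of the two available colors for $w$ works for a given promoted $u_i$.

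What you are missing, and what the paper supplies, is that this forces no conflict because at most one $u_i$ in $Q$ can have degree $k$ in $G$ (when $|V(G)|>k+1$). Indeed, if $u_1,u_2\in Q$ were both degree-$k$ vertices, then $u_1$ is simplicial, so $G-u_1$ is again a $k$-tree; but $u_2$ loses the neighbor $u_1$ and drops to degree $k-1$, contradicting the minimum degree $k$ in a $k$-tree. (Equivalently, the argument above forces $z_1=z_2$, so any two promoted $u_i$ would share the same missing color anyway.) With at most one constraint on $w$'s color, you simply pick that unique missing color; if no $u_i$ has degree $k$, either of the two available colors works. That is the paper's Case~1/Case~2 split.
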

\begin{proof}
We proceed by induction on $n=|V(G)|$. For $n=k+2$, $G$ consists of a $k$-vertex attached to a $(k+1)$-clique. In particular, $G$ contains two $k$-vertices and $k$ vertices of degree exactly $k+1$. As each $(k+1)$-vertex is adjacent to all vertices of $G$, its closed neighborhood must contain all $k+2$ colors. The lemma holds true when $n=k+2$. Hence, assume the statement is true for all $k$-trees on $k+1<n$ vertices. Let $G$ be a $k$-tree on $n$ vertices and fix a $k$-vertex $x\in V(G)$. It follows that $G-x$ is a $k$-tree as well. By induction hypothesis, there exists a proper $(k+2)$-coloring of $G-x$ such that the closed neighborhood of each $(k+1)^+$-vertex contains all $k+2$ colors. We will construct a such proper $(k+2)$-coloring of $G$ by using the proper $(k+2)$-coloring of $G-x$ as a base. Let $N_{G}(x)=\{u_1, u_2,\ldots, u_k\}$. We consider two cases:\\\\
\textit{Case 1: } $d_{G-x}(u_i)=k$ for some $1\leq i \leq k$. 

Then $d_G(u_i)=k+1$. In particular, there exists only one such $u_i$. It follows that $G[N_{G-x}[u_i]]\cong K_{k+1}$. Thus, $N_{G-x}[u_i]$ must contain exactly $(k+1)$ colors. Hence, there exists one remaining color that we may assign $x$ in order to form a proper $(k+2)$-coloring of $G$ such that that $N_G[u_i]$ contains all $k+2$ colors.\\

\textit{Case 2: } $d_{G-x}(u_i)>k$ for all $1\leq i \leq k$. 

Then for each $1\leq i \leq k$, $N_{G-x}[u_i]$ contains all $k+2$ colors. As $G$ is a $k$-tree, then $G[N_G(x)]\cong K_{k}$ and $N_G(x)\subsetneq N_{G-x}[u_i]$. Hence, $N_G(x)$ contains exactly $k$ colors, leaving two color options for $x$. We may assign $x$ one of the two remaining colors to form a proper $(k+2)$-coloring of $G$. 
\end{proof}

As the closed neighborhood of every $(k+1)^+$-vertex in $G$ contains all distinct $k+2$ colors, we obtain the following corollary.

\begin{corollary}
Suppose a $k$-tree $G$ has a proper $(k+2)$-coloring $C$ such that for all $(k+1)^+$-vertices $v\in V(G)$, $N_G[v]$ contains all $k+2$ colors. Let $Z_p\subseteq V(G)$ contain all vertices of the color class $p$ in $C$ for $1\leq p \leq k+2$. Then $Z_p$ is independent and dominates all vertices of $G$, except possibly $k$-vertices. 
\end{corollary}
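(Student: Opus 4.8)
The plan is to prove the corollary directly from the hypotheses, which are exactly the conclusion of Lemma~1; the statement is essentially a reformulation of what the coloring guarantees. I would first establish independence of each color class $Z_i$, then establish the domination property for the relevant vertices.

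For independence, the key observation is that $C$ is a \emph{proper} $(k+2)$-coloring by hypothesis. By definition of a proper coloring, no two adjacent vertices receive the same color. Hence if $u,v\in Z_i$ are distinct, they both carry color $i$, so they cannot be adjacent. This shows $Z_i$ is an independent set, and requires no case analysis.

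For domination, I would fix an arbitrary vertex $v\in V(G)-Z_i$ that is \emph{not} a $k$-vertex, and argue that $v$ has a neighbor in $Z_i$. Since every vertex of a $k$-tree has degree at least $k$ (the minimum degree of a $k$-tree is $k$), any vertex that is not a $k$-vertex is a $(k+1)^+$-vertex. By the hypothesis on $C$, the closed neighborhood $N_G[v]$ therefore contains all $k+2$ colors; in particular it contains a vertex of color $i$. Since $v\notin Z_i$, the vertex of color $i$ in $N_G[v]$ is not $v$ itself, so it must lie in the open neighborhood $N_G(v)$. Thus $v$ is adjacent to a vertex of $Z_i$, which is exactly what domination requires. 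The phrase ``except possibly $k$-vertices'' in the statement accounts for the fact that a $k$-vertex $v$ need not have its closed neighborhood contain all colors, so we cannot guarantee it is dominated by $Z_i$.

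I do not anticipate a genuine obstacle here, since the corollary is an almost immediate consequence of Lemma~1 together with the minimum-degree property of $k$-trees. The only point requiring mild care is the observation that every vertex in a $k$-tree has degree at least $k$, so that ``not a $k$-vertex'' coincides with ``$(k+1)^+$-vertex'' and Lemma~1's hypothesis applies; this follows directly from the inductive construction of $k$-trees, since a newly added vertex is joined to exactly $k$ vertices of a $k$-clique and existing vertices only gain degree.
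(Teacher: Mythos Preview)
Your proposal is correct and matches the paper's approach: the paper simply states that the corollary ``immediately follows'' from the fact that the closed neighborhood of every $(k+1)^+$-vertex contains all $k+2$ colors, and your argument spells out exactly this implication (together with the observation that the minimum degree of a $k$-tree is $k$). Your write-up is more detailed than the paper's one-line justification, but the underlying reasoning is identical.
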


Using Lemma 1 and Corollary 1, we proceed with stating and proving our main result.  

\begin{theorem} \label{main}
Given a $k$-tree $G$ on $n$ vertices, $$\gamma_i(G)\leq \left\lfloor \frac{n+|V^G_k|}{k+2} \right\rfloor.$$ Furthermore, this bound is tight. 
\end{theorem}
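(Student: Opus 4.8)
The plan is to turn the coloring of Corollary 1 into $k+2$ independent dominating sets, one per color class, and then average. Fix a proper $(k+2)$-coloring as in Lemma 1, with color classes $Z_1,\dots,Z_{k+2}$. By Corollary 1 each $Z_i$ is independent and dominates every vertex of $G$ except possibly some $k$-vertices. The strategy is to repair each $Z_i$ cheaply by adding a small independent set that dominates the leftover $k$-vertices, and then to observe that the total cost of all $k+2$ repairs is controlled by $|V^G_k|$, so the cheapest repair gives the claimed bound.

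First I would pin down exactly which $k$-vertices $Z_i$ fails to dominate. If $v$ is a $k$-vertex then $v$ is simplicial, so $G[N_G[v]]\cong K_{k+1}$; since the coloring is proper, $N_G[v]$ carries exactly $k+1$ distinct colors and hence misses exactly one color $m(v)$. Writing $U_i=\{v\in V^G_k : m(v)=i\}$, the class $Z_i$ dominates every $k$-vertex except those in $U_i$, and the sets $U_1,\dots,U_{k+2}$ partition $V^G_k$, so $\sum_i |U_i|=|V^G_k|$. Moreover no vertex of $U_i$ has color $i$, giving $Z_i\cap U_i=\emptyset$, and no vertex of $U_i$ is adjacent to a vertex of $Z_i$, since a neighbor of color $i$ would place color $i$ inside $N_G[v]$.

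Next I would repair $Z_i$ without destroying independence. Let $S_i$ be a maximal independent subset of the induced subgraph $G[U_i]$; by maximality $S_i$ dominates all of $U_i$, and $|S_i|\le |U_i|$. Because $S_i\subseteq U_i$ is independent and has no edges to $Z_i$, the set $D_i:=Z_i\cup S_i$ is independent, and it dominates $G$ since $Z_i$ handles all $(k+1)^+$-vertices together with the $k$-vertices outside $U_i$, while $S_i$ handles $U_i$. Thus each $D_i$ is an independent dominating set with $|D_i|\le |Z_i|+|U_i|$. Summing and using $\sum_i |Z_i|=n$ and $\sum_i|U_i|=|V^G_k|$ yields $\sum_{i=1}^{k+2}|D_i|\le n+|V^G_k|$, so the smallest $D_i$ satisfies $\gamma_i(G)\le \min_i |D_i|\le \frac{n+|V^G_k|}{k+2}$, which is the upper bound.

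For tightness I would generalize the caterpillar of Section 2. The goal is a family $\{G_m\}$ of $k$-trees containing $m$ simplicial $k$-vertices $v_1,\dots,v_m$ whose closed neighborhoods $N_G[v_j]=\{v_j\}\cup Q_j$, with each $Q_j$ a $k$-clique, are pairwise disjoint. Every independent dominating set must meet each $N_G[v_j]$ to dominate $v_j$, so disjointness forces $\gamma_i(G_m)\ge m$, exactly as in the $k=1$ case. The remaining task is to link the cliques $Q_j$ into a single $k$-tree so that $n+|V^{G_m}_k|=m(k+2)$, whereupon the upper bound above is attained. I expect this to be the main obstacle: unlike the upper bound, which is essentially forced by the coloring, the extremal construction must be engineered so that the connecting spine creates no extra $k$-vertices that would spoil the count, after which the matching lower bound pins down $\gamma_i(G_m)=m$. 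Carrying out this count for an explicit spine is a direct but delicate verification.
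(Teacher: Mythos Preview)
Your upper bound argument is correct and is exactly the paper's approach: the paper sets $S_i=V_k^G\setminus N[Z_i]$, which coincides with your $U_i$, and uses $Z_i\cup S_i$ directly as an independent dominating set before averaging. Your extra step of passing to a maximal independent subset of $G[U_i]$ is harmless but unnecessary, since for $n>k+1$ the set $V_k^G$ is already independent (two adjacent $k$-vertices would be simplicial with identical closed neighborhoods, and deleting one would leave the other with degree $k-1$ in a $k$-tree on at least $k+1$ vertices); the paper relies on this implicitly. For tightness the paper carries out precisely your caterpillar generalization: it takes a $k$-path on $kt$ vertices partitioned into $t$ pairwise disjoint $k$-cliques and attaches one simplicial vertex to each, giving $n=(k+1)t$, $|V_k^G|=t$, and $\gamma_i=t$.
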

\begin{proof}
As $\gamma_i(G)$ only takes on natural number values, it suffices to prove $\gamma_i(G)\leq \frac{n+|V^G_k|}{k+2}.$ If $n=k+1$, then $G\cong K_{k+1}$ and $\gamma_i(G)=1\leq \frac{(k+1)+(k+1)}{k+2}=\frac{2k+2}{k+2}=\frac{k}{k+2}+1$. Hence, assume $n>k+1$. By Lemma 1, there exists a proper $(k+2)$-coloring $C$ of $G$ such that the closed neighborhood of each $(k+1)^+$-vertex contains all $k+2$ colors. For $1\leq p \leq k+2$, let $Z_p \subseteq V(G)$ contain all the vertices with color $p$. For each $p$, define $S_p=V_k^G - N[Z_p]$; that is, $S_p$ is the set of vertices in $V_k^G$ not dominated by $Z_p$. 

For each $v\in V_k^G$, $G[N[v]]\cong K_{k+1}$ by the construction restrictions of a $k$-tree. Hence, $N[v]$ must contain exactly $k+1$ colors. Therefore, each $v$ is not dominated by exactly one color of $C$. It follows that each $v$ belongs to only one $S_p$. Hence, $S_p\cap S_q=\emptyset$ for $p\neq q$. Furthermore, as $n>k+1$, no pair of distinct $k$-vertices are adjacent. By Corollary 1 and our construction of $S_p$, $Z_p\cup S_p$ is an independent dominating set of $G$ for each $1\leq p \leq k+2$. Thus for each $p$, $\gamma_i(G)\leq |Z_p\cup S_p|$.

As $S_p\cap S_q=\emptyset$ for $p\neq q$, we have $$\sum_{p=1}^{k+2} |Z_i\cup S_i|=\sum_{i=1}^{k+2} |Z_i| + \sum_{i=1}^{k+2} |S_i|=n+|V_k^G|.$$

We may consider the average size of our independent dominating sets across the $k+2$ color classes. Therefore,  
$$\gamma_i(G)\leq \frac{1}{k+2}\sum_{p=1}^{k+2} |Z_i\cup S_i|=\frac{n+|V_k^G|}{k+2}.$$
\end{proof}

A $k$-path is a $k$-tree with exactly two $k$-vertices. To prove the tightness of Theorem \ref{main}, consider a $k$-path $P$ with vertex set $V(P)=\{v_{i,j}|1\leq i \leq t, 1\leq j \leq k\}$ for some $t\in \mathbb{N}$ such that $G[\{v_{i,1},v_{i,2},\ldots,v_{i,k-1},v_{i,k}\}]\cong K_k$ for each $1\leq i \leq t$. Then $P$ has $kt$ vertices and contains exactly $t$ vertex disjoint $k$-clique subgraphs. Construct a $k$-tree $P'$ by attaching a $k$-vertex $x_i$ for each $p$ to the disjoint $k$-clique subgraph formed by $\{v_{i,1},v_{i,2},\ldots,v_{i,k-1},v_{i,k}\}$ of $P$. (See Figure 3 for an example where $k=3$ and $t=4$.) Since $N_{P'}[x_i]\cap N_{P'}[x_j]=\emptyset$ for distinct $i\neq j$, then any minimum dominating set of $I$ of $P'$ must contain at least one vertex from $N[x_i]$ for each $1\leq i \leq t$. Hence $\gamma_i(P')\geq \gamma(P')\geq t$. By Theorem 3, $\gamma_i(P')\leq \frac{n+|V_k^{P'}|}{k+2}=\frac{(kt+t)+t}{k+2}=t$. Hence, $\gamma_i(P')=t$.

\begin{figure}[h]
\begin{center}
\includegraphics[scale=0.6]{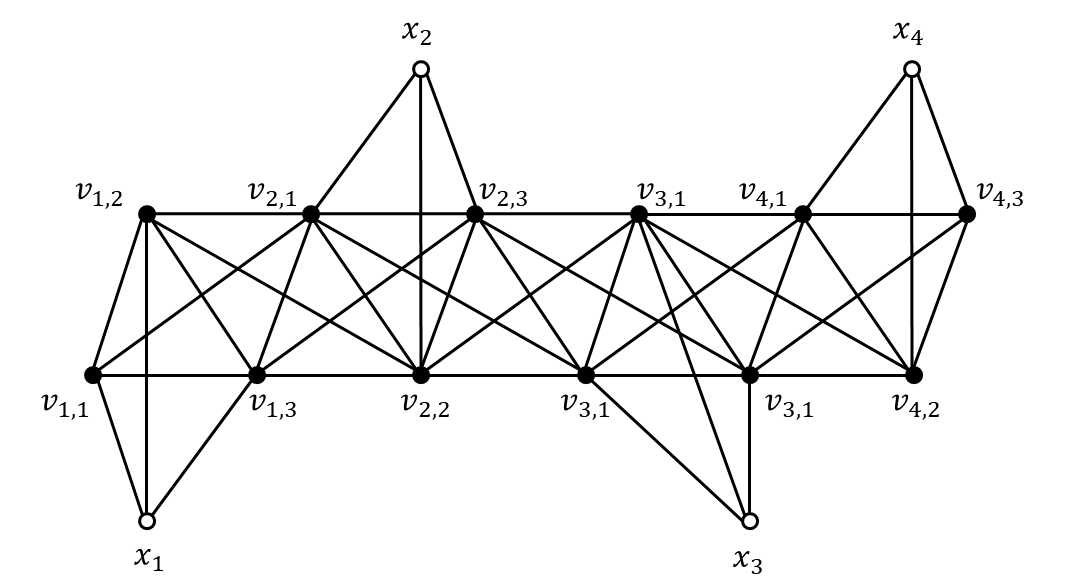}
\caption{Constructing a 3-tree from the 3-path with $t=4$.}
\end{center}
\end{figure}

\section{Concluding Remarks and Future Work}
Throughout this paper, we refer to the recursive definition of the $k$-tree; however, there exists an equivalent characterization based on treewidth. The treewidth of a graph is the minimum width among all possible tree decompositions of $G$, where the width of a decomposition is one less than the size of its largest bag. A graph is called a partial $k$-tree if it has treewidth at most $k$. A $k$-tree is exactly an edge maximal partial $k$-tree. One natural route of future work is to extend our results to the broader class of partial $k$-trees. Examples of partial $k$-trees problems and applications have been studied by Granot and Skorin-Kapove \cite{granot}, Telle and Proskurowski \cite{telle}, and Araki et al. \cite{araki}.

Another natural route is to determine the independent bondage number of $k$-trees. Given a nonempty graph $G$, an edge subset $B\subset E(G)$ is a independent bondage set if the deletion of the edges in $B$ results in a strictly larger independent domination number; that is $\gamma_i(G-B)>\gamma_i(G)$. The independent bondage number $b_i(G)$ is the minimum size of an independent bondage set. The independent bondage number has only been studied in depth in recent years. In 2018, Priddy, Wang, and Wei determined the exact value for some classes of graphs. Pham and Wei \cite{pham}, as well as Gamlath, Reid, and Wei \cite{gamlath1}, established constant upper bounds for planar graphs with minimum degree at least three. In a separate paper, Gamlath, Pham, and Wei \cite{gamlath2} obtained upper bounds for planar graphs under girth constraints. Additionally, Kerdjoudja and Chellalia \cite{samia} determined upper bounds under certain cycle conditions. As most $k$-trees are not planar, it is of interest to see if bounds can be determined for $k$-trees.


\begin{thebibliography}{999}
\bibitem{andrade}
J. S. Andrade Jr., H. J. Herrmann, R. F. S. Andrade, and L. R. da Silva,
\textit{Apollonian networks: simultaneously scale-free, small world, Euclidean, space filling, and with matching graphs},
Phys. Rev. Lett. \textbf{94} (2005), 018702.

\bibitem{araki}
T. Araki, M. Matsushita, and Y. Otachi,
\textit{Completely independent spanning trees in (partial) $k$-trees},
Discuss. Math. Graph Theory \textbf{35} (2015), no.~3, 427--437.

\bibitem{beineke} 
L.W. Beineke and R.E. Pippert,
\textit{The number of labeled $k$-dimensional trees},
J. Combin. Theory \textbf{6} (1969), 200--205.

\bibitem{campos} 
C.N. Campos and Y. Wakabayashi,
\textit{On dominating sets of maximal outerplanar graphs},
Discrete Appl. Math. \textbf{161} (2013), 330--335.

\bibitem{duffin}
R.J. Duffin,
\textit{Topology of series-parallel networks},
J. Math. Anal. Appl. \textbf{10} (1965), no. 2, 303--318.

\bibitem{favaron}
O. Favaron,
\textit{A bound on the independent domination number of a tree},
J. Graph Theory \textbf{16} (1992), no. 1, 19--27.

\bibitem{gamlath1}
E.G.K.M. Gamlath, B. Wei, and T.J. Reid,
\textit{Graph configurations and independent bondage numbers of planar graphs},
preprint, arXiv:2402.02083 [math.CO], 2024.

\bibitem{gamlath2}
E.G.K.M. Gamlath, A. Pham, and B. Wei,
\textit{Independent bondage number in planar graphs under girth constraints},
Mathematics \textbf{13} (2025), 3662.

\bibitem{garey}
M.R. Garey and D.S. Johnson,
\textit{Computers and Intractability: A Guide to the Theory of NP-Completeness},
W.H. Freeman, 1990.

\bibitem{granot}
D.~Granot and D.~Skorin\textendash Kapov,
\textit{On some optimization problems on $k$-trees and partial $k$-trees},
Discrete Appl. Math. \textbf{48} (1994), no.~2, 129--145.

\bibitem{harary} 
F. Harary and E.M. Palmer,
\textit{On acyclic simplicial complexes},
Mathematika \textbf{15} (1968), 115--122.

\bibitem{pellegrini}
G.L. Pellegrini, L. de Arcangelis, H.J. Herrmann, and C. Perrone-Capano,
\textit{Modeling the brain as an Apollonian network},
arXiv:q-bio/0701045, 2007.

\bibitem{pham}
A. Pham and B. Wei,
\textit{Independent bondage number of planar graphs with minimum degree at least 3},
Discrete Math. \textbf{345} (2022), no. 12, 113072.

\bibitem{samia}
K. Samia and M. Chellali,
\textit{A note on the independent bondage number of planar graphs},
Filomat \textbf{39} (2025), no. 13, 4523--4529.

\bibitem{shook}
J.~M.~Shook and B.~Wei,
\textit{Some properties of \(k\)-trees},
Discrete Math. \textbf{310} (2010), 2415--2425.

\bibitem{telle}
J. A. Telle and A. Proskurowski,
\textit{Algorithms for vertex partitioning problems on partial $k$-trees},
SIAM J. Discrete Math. \textbf{10} (1997), 529--550.

\bibitem{tokunaga}
S. Tokunaga,
\textit{Dominating sets of maximal outerplanar graphs},
Discrete Appl. Math. \textbf{18} (1987), 309--319.
\end{thebibliography}
\end{document}